\renewcommand*\env@matrix[1][*\c@MaxMatrixCols c]{%
  \hskip -\arraycolsep
  \let\@ifnextchar\new@ifnextchar
  \array{#1}}
 \def\MR#1{} %Get rid of MR numbers in references
\def\mcc{M\raise.5ex\hbox{c}C}
\def\mccarthy{M\raise.5ex\hbox{c}Carthy}
\def\vare{\varepsilon}
\let\i=\infty
\def\={\ = \ }
\def\C{\mathbb C}
\def\T{\mathbb T}
\def\D{\mathbb D}
\def\B{\mathbb B}
\def\be{\setcounter{equation}{\value{theorem}} \begin{equation}}
\def\ee{\end{equation} \addtocounter{theorem}{1}}
\def\beq{\begin{eqnarray*}}
\def\eeq{\end{eqnarray*}}
\def\vs{\vskip 5pt}
\def\bp{{\sc Proof: }}
\def\ep{{}{\hfill $\Box$} \vskip 5pt \par}
\def\bl{\begin{lemma}}
\def\el{\end{lemma}}
\def\bt{\begin{theorem}}
\def\et{\end{theorem}}
\def\bprop{\begin{prop}}
\def\eprop{\end{prop}}
\def\bd{\begin{definition}}
\def\ed{\end{definition}}
\def\br{\begin{remark}}
\def\er{\end{remark}}
\def\bexer{\begin{exercise}}
\def\eexer{\end{exercise}}
\newtheorem{theorem}{Theorem}[section]
\newtheorem{prop}[theorem]{Proposition}
\newtheorem{lemma}[theorem]{Lemma}
\newtheorem{definition}[theorem]{Definition}
\renewcommand\Re{\mathrm{Re\, }}
\renewcommand\Im{\mathrm{Im\, }}
\def\vare{\varepsilon}
\newcommand\vp{\varphi}
\numberwithin{equation}{section}
\title{The Julia-Carath\'eodory theorem on the bidisk revisited}
\author{John E. M\raise.5ex\hbox{\tiny c}Carthy}
\thanks{Partially supported by National Science Foundation Grant  
DMS 1565243}
\author{
James E. Pascoe}
\thanks{Partially supported by National Science Foundation Fellowship  
DMS 1606260}
\begin{document}

\subjclass[2010]{32A40}
\date{\today}
\keywords{}

\begin{abstract}
The Julia quotient measures the ratio of the distance of a function value from the boundary to the distance from the boundary.
The Julia-Carath\'eodory theorem on the bidisk states that if the Julia quotient is bounded along some sequence of nontangential approach to some point in the torus, the function must have directional derivatives in all directions pointing into the bidisk. The directional derivative, however, need not be a linear function of the direction in that case. 
In this note, we show that if the Julia quotient is uniformly bounded along every sequence of nontangential approach, the function must have a linear directional derivative. Additionally, we analyze a weaker condition, corresponding to being Lipschitz near the boundary, which implies the existence of a linear directional derivative for rational functions.
\end{abstract}

\bibliographystyle{plain}
\maketitle

\section{Introduction}
\label{seca}

%{\red In the usual way things are stated we have maps into closed domains.}

%{\blue Unless they are constant, they will never hit the boundary.}

Consider the following four holomorphic functions. They all map the bidisk $\D^2$ to $\overline{\D}$,
and they all have a singularity at $(1,1)$. However, the nature of the singularity gets progressively worse.

\beq
\phi_1(z) &\= & \frac{-4 z_1 z_2^2 + z_2^2 + 3 z_1 z_2 - z_1 + z_2}{
z_2^2 - z_1 z_2 - z_1 - 3 z_2 +4}
\\
\phi_2(z) &\= & 
 \frac{(z_2 -z_1) -2(1-z_1)(1-z_2)  \log (\frac{1+z_2}{1-z_2} \frac{1-z_1}{1+z_1} ) } 
{(z_2 -z_1)  + 2(1-z_1)(1-z_2)  \log (\frac{1+z_2}{1-z_2} \frac{1-z_1}{1+z_1})  } 
%\frac{2 (z_1 - z_2) - (1-z_1)(1-z_2) \log \left( \frac{1+z_1}{1-z_1}\frac{1-z_2}{1+z_2} \right)}
%{2 (z_1 - z_2) + (1-z_1)(1-z_2) \log \left( \frac{1+z_1}{1-z_1}\frac{1-z_2}{1+z_2} \right)}
\\
\phi_3(z) &\= & 
\frac{3z_1 z_2 - 2z_1 - z_2}{3 - z_1 - 2 z_2} \\
%\phi_4(z) &\=& \exp \left( - \frac{2+z_1 + z_2}{2-z_1 - z_2}\right) .
\phi_4(z) &\= & \prod_{n=1}^\i  
\frac{2(1-2^{-n}) - z_1 - z_2}{2 -(1 - 2^{-n})(z_1 + z_2)}
 .
\eeq

In one variable, the Julia-Carath\'eodory theorem asserts that for a holomorphic function 
that maps $\D$ to $\D$, a relatively mild regularity assumption at a boundary point results in 
seemingly stronger forms of regularity \cite{ju20, car29}. Variants of their results on the bidisk have been studied in \cite{wlo87, jaf93, ab98,amy12a, amy11b,aty12,bk13}.
In \cite{amy12a} two notions of regularity, a weaker one called a B point and a stronger one called a C point, were studied (we shall give definitions in Section~\ref{secb}).
Letting $\chi$ denote the point $(1,1)$ on the distinguished boundary of $\D^2$,
the function $\phi_1$ above is the only one with  a C point at $\chi$;  the first three functions have a B point 
at $\chi$, and $\phi_4$ does not.

In this note, we  give an alternative characterization of C points (Theorem~\ref{mainresult}).
We also introduce a new notion of regularity, called a B+ point, which is intermediate between
B point and C point.  We analyze these points in Theorem \ref{bplusmain}), and show that
the function $\phi_2$ above has a B+ point at $\chi$, and $\phi_3$ does not.
We prove that if a function is rational, every B+ point is actually a C point (Theorem \ref{thmrat}).

\section{B, B+ and C points}
\label{secb}

In the bidisk, we shall measure distances with the supremum norm.
If $\tau \in \partial \D^2$, we shall say that a set $S \subset \D^2$ is 
non-tangential at $\tau$ if there exists a constant $M > 0$ such that
\be
\label{eqb0}
{\rm dist } (z, \tau) \ \leq \ M \ {\rm dist}(z,\partial \D^2) \quad \forall \ z \in S.
\ee
The least $M$ that satisfies \eqref{eqb0} we shall call the {\em aperture} of $S$.
We shall say that a statement holds non-tangentially at $\tau$ if, for every set
$S$ that is non-tangential at $\tau$, there exists $\vare > 0$ such that the
statement holds on $S \cap \B(\tau, \vare)$.

We shall let $\Pi$ denote the upper half plane. A Pick function in $d$ variables
is a holomorphic function from $\Pi^d$ to $\overline{\Pi}$.

For the rest of this section, we shall assume that 
$\varphi: \mathbb{D}^2 \rightarrow \overline{\mathbb{D}}$ is analytic
	and $\tau \in \T^2$ is in the distinguished boundary of the bidisk.
%	\blue
%	Do we ever need $\tau \in \T^2$?
%	\black
%	{\red  However,
%	do we have such a nice representation of the derivative in
%	the $\tau \notin \partial {\mathbb D}^2$ case?}

	The point $\tau$ is called a B point for $\vp$ if either (A) or (B) of the following equivalent conditions hold:
	
\bt
\label{thmb1} \cite{amy12a} The following are equivalent:
\[
\text{(A)} \qquad \liminf_{z \rightarrow \tau}  \frac{1 - |\varphi(z)|}{1-\|z\|} < \infty .
\]
(B) 	There exists an $\omega \in \mathbb{T}$
		such that 
		for every nontangential set $S$ there exists 
	 $\alpha >0,   \varepsilon >0,$ such that
		for every $z \in S \cap B(\tau,   \varepsilon),$
	\be
	\label{eqb1}
	\|\varphi(z) - \omega\| \leq \alpha\|\tau-z\|.\ee
Moreover, if these conditions hold, then for every $h \in \C^2$
such that $\tau + th \in \D^2$ for small positive $t$, then the directional
derivative
\[
D\vp(\tau) [h] \ := \
\lim_{t \downarrow 0} \frac{\phi(\tau + th) - \omega}{t} \]
exists, and is given by
\be
\label{eqb2}
D\varphi(\tau)[h] \=  - \omega\overline{\tau_2}h_2 \ \eta\left(
		\frac{\overline{\tau_2}h_2}{\overline{\tau_1}h_1}\right) , \ee
		where $\eta(w)$ and $-w\eta(w)$ are Pick functions in one variable.
			\et

We shall say that a point is a B+ point if the constant $\alpha$ in \eqref{eqb1}
can be chosen independently of the non-tangential set $S$.

\begin{definition}
We say that $\varphi$ has a \emph{$B^+$-point} at $\tau$ if 
		there exists an $\omega \in \mathbb{T},$ $\alpha >0,$ such that
		for every nontangential set $S$
		there exists an $  \varepsilon >0,$ such that
		for every $z \in S \cap B(\tau,   \varepsilon),$
			$$\|\varphi(z) - \omega\| \leq \alpha\|\tau-z\|.$$
			\end{definition}
		
			We show the following theorem characterizing $B^+$ points.
\begin{theorem}\label{bplusmain}
%	Let $\varphi: \mathbb{D}^2 \rightarrow \overline{\mathbb{D}}$ be analytic
%	and $\tau \in \mathbb{T}^2.$
	The following are equivalent:
	\begin{enumerate}
		\item $\varphi$ has a $B^+$ point at $\tau.$
		\item $\varphi$ has a $B$-point at $\tau$ and for some constant $\alpha$
			$$|D\varphi(\tau)[h]| \leq \alpha \|h\|.$$
		\item There exists an $\omega \in \mathbb{T}$, non-negative real numbers
		$A $ and $B$, 
		and a bounded analytic function $g$ on the domain $\C \setminus [-1,1]$  such that
			$$D\varphi(\tau)[h] = Ah_1 + Bh_2  +
		(h_2-h_1)g\left(\frac{h_1+h_2}{h_1-h_2}\right).$$
	\end{enumerate}
\end{theorem}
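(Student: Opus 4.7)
The plan is to prove (1) $\Rightarrow$ (2), which is immediate; (2) $\Leftrightarrow$ (3), which comes from a concrete analysis of the Pick function in Theorem \ref{thmb1}; and (2) $\Rightarrow$ (1), which is the main analytic obstacle. For (1) $\Rightarrow$ (2), given $h \in \mathbb{C}^2$ with $\tau + th \in \D^2$ for small $t > 0$, the ray $\{\tau + th : 0 < t < t_0\}$ is a nontangential set at $\tau$, so the $B^+$ hypothesis gives $\|\varphi(\tau+th) - \omega\| \leq \alpha t\|h\|$ for $t$ small; letting $t \downarrow 0$ produces $|D\varphi(\tau)[h]| \leq \alpha\|h\|$, and that $\varphi$ has a B-point at $\tau$ is then immediate from the definition.

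For (2) $\Leftrightarrow$ (3), I normalise so that $\tau = (1,1)$ and $\omega = 1$ and apply Theorem \ref{thmb1} to write $D\varphi(\tau)[h] = -h_2\,\eta(h_2/h_1)$, where $\eta$ and $-w\eta(w)$ are one-variable Pick functions. Setting $\zeta = h_2/h_1$, the bound $|D\varphi(\tau)[h]| \leq \alpha\|h\|$ becomes the pair $|\eta(\zeta)| \leq \alpha$ for $|\zeta| \geq 1$ and $|\zeta\eta(\zeta)| \leq \alpha$ for $|\zeta| \leq 1$, on the appropriate part of $\Pi$. By the Schwarz reflection principle (a Pick function locally bounded near a real interval extends analytically across that interval), $\eta$ then continues across $\mathbb{R} \setminus [-1,1]$ and $w\eta(w)$ continues across $(-1,1)$; together this makes $\eta$ holomorphic on $\mathbb{C} \setminus [-1,1]$ with at worst a simple pole at $\zeta = 0$. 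Extracting the residue at $0$ and the value at $\infty$ peels off linear summands $A h_1 + B h_2$ (with $A, B \geq 0$ coming from Pickness), and the M\"obius substitution $u = (1+\zeta)/(1-\zeta)$, which maps $\mathbb{C} \setminus (-\infty, 0]$ onto $\mathbb{C} \setminus [-1,1]$ and sends $\zeta - 1$ to $-2/(u+1)$, converts the bounded remainder into the required bounded analytic $g$ on $\mathbb{C} \setminus [-1,1]$ with prefactor $h_2 - h_1$, yielding (3). Reading the formula in (3) directly gives $|D\varphi(\tau)[h]| \leq (A + B + 2\|g\|_\infty)\|h\|$, which is (3) $\Rightarrow$ (2).

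For (2) $\Rightarrow$ (1), I propose to use an integral or transfer-function representation of $\varphi$ adapted to its B-point at $\tau$---in the spirit of the bidisk representations in \cite{amy12a, aty12}---in which $D\varphi(\tau)$ appears explicitly as boundary data. Inserting the explicit form of $D\varphi(\tau)$ from the equivalent (3) and estimating the resulting kernel on a generic nontangential set at $\tau$ should give $\|\varphi(z) - \omega\| \leq \alpha \|z - \tau\|$ with a constant independent of aperture. I expect this uniformity to be the main obstacle: any naive kernel estimate carries an aperture-dependent factor that must be cancelled by the boundedness of $g$ on $\mathbb{C} \setminus [-1, 1]$, so that the final constant depends only on $A$, $B$, and $\|g\|_\infty$. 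Making this cancellation explicit is where I expect the technical heart of the argument to lie.
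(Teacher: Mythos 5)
Your direction $(1)\Rightarrow(2)$ is fine, and $(3)\Rightarrow(2)$ is the same easy reading-off as in the paper. But the proposal has two genuine problems. First, in $(2)\Rightarrow(3)$ the principle you invoke --- ``a Pick function locally bounded near a real interval extends analytically across that interval'' --- is false (e.g.\ $\int_0^1(t-w)^{-1}\,dt$ is Pick and bounded near compact subsets of $(0,1)$, but its boundary values from above and below differ by $2\pi i$ there), and the conclusion you draw from it fails for exactly the class at hand: for the paper's own $B^+$ example $\phi_2$ one computes $\eta(w)=-4(w-1)^{-1}\log w$, which is bounded on $\Pi\cap\{|w|\ge 1\}$ yet has a genuine branch cut along all of $(-\infty,0]$, so $\eta$ does \emph{not} continue across $(-\infty,-1)$. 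The analytic continuation the theorem needs lives in the variable $\zeta=(h_1+h_2)/(h_1-h_2)$, not in $w=h_2/h_1$, and it is not a consequence of boundedness: it comes from the Agler--Tully-Doyle--Young representation (Theorem~\ref{thmc1}). Writing $D\vp(\tau)[h]$ as the homogeneous Pick function \eqref{eqc4} and resolving it as in Lemma~\ref{resolvedrepresentation}, the remainder $g$ is by construction the Cauchy transform of the finite measure $(1-t^2)\,d\mu(t)$ on $[-1,1]$, hence holomorphic on $\C\setminus[-1,1]$ and vanishing at infinity; only then does the linear bound in (2) force $g$ to be bounded (Lemma~\ref{rationalresolved}). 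Your route cannot be repaired without importing that representation, at which point it becomes the paper's argument.

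Second, you do not actually prove $(2)\Rightarrow(1)$: you describe a hoped-for kernel estimate from an unspecified representation of $\vp$ and flag the aperture-uniformity as an unresolved obstacle. That implication is the content of the theorem, and the paper's argument is far more elementary than what you sketch: given a nontangential set $S$ of aperture $M$, truncate it into a cone $\{\tau+th:\ 0<t<\vare,\ h\in K\}$ with $K$ a compact set of unit directions, and use that the difference quotients $\Psi_t(h)=t^{-1}(\vp(\tau+th)-\vp(\tau))$ converge to $D\vp(\tau)[h]$ uniformly for $h\in K$. Since $|D\vp(\tau)[h]|\le\alpha\|h\|=\alpha$ on $K$, one gets $|\vp(z)-\omega|\le(\alpha+\beta)\|\tau-z\|$ on $S\cap B(\tau,\delta)$; the aperture enters only through how small $\delta$ must be taken, never through the constant, which is precisely the $B^+$ condition. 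No integral or transfer-function representation of $\vp$ itself is needed.
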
	
%{\red Should we talk about the domain of $g?$}
	
The point $\tau$ is called a C point for $\vp$ if either condition (C) or (D) below holds.
They say that $D\phi(\tau)[h]$ is a linear function of $h$.
\bt
\cite{amy12a}
The following are equivalent:

(C)		There exist $\omega \in \mathbb{T}$ and $  \lambda \in \mathbb{C}^2$
such that
		for every nontangential set $S$ and every $\beta >0$,
		there exists 		
		$  \varepsilon >0$ such that
		for every $z \in S \cap B(\tau,   \varepsilon),$
			$$\|\varphi(z) - \omega -  \lambda \cdot z\| \leq \beta\|\tau-z\|.$$
	
	(D) 	There exist $\omega \in \mathbb{T}$ and $  \lambda \in \mathbb{C}^2$
such that	$\lim_{z \xrightarrow{nt} \tau} \vp(z) = \omega$ and
$\lim_{z \xrightarrow{nt} \tau} \nabla \phi(z) =  \lambda$.
			\et

%{\red What is below is essentially informal? Should we make that more clear?}
%{\blue I'm not sure anything is informal, though what ``uniformly non-tangentially'' means
%is a little vague without referring back to Defn. 2.5}

Thus:
\begin{itemize}
	\item At a $B$-point,
		$$\varphi(z) = \omega + O(\tau-z) \text{ nontangentially}.$$
	\item At a $B^+$-point,
		$$\varphi(z) = \omega + O(\tau-z) \text{ uniformly nontangentially}.$$
%	which means the constant $\alpha$ does not depend on the non-tangential
%	set $S$.
	\item At a $C$-point
		$$\varphi(z) = \omega + \eta \cdot z + o(\|\tau-z\|) \text{ nontangentially}.$$
\end{itemize}

We prove that if the $\liminf$ in condition (A) is replaced by a non-tangential $\limsup$,
we get another characterization of C points:
\begin{theorem}\label{mainresult}
	Let $\varphi: \mathbb{D}^2 \rightarrow \overline{\mathbb{D}}$ be analytic
	and $\tau \in \mathbb{T}^2.$
	If
		$$\limsup_{z \xrightarrow{nt} \tau}  \frac{1 - |\varphi(z)|}{1-\|z\|} < \infty,$$
	then $\varphi$ has a $C$-point at $\tau.$
	(Here the $\limsup$ is over all sequences which approach $\tau$ nontangentially.)
\end{theorem}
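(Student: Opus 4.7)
The plan is to combine Theorem~\ref{thmb1} with a slice-wise analysis to extract a sharp pointwise inequality on the Pick function $\eta$ appearing in the B-point characterization, and then to show that this inequality forces $\eta(w) = a/w + b$ for some real $a, b \leq 0$, so that $D\phi(\tau)$ becomes linear and we obtain the C-point.

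Since the nontangential $\limsup$ majorizes the $\liminf$, the hypothesis implies condition (A) of Theorem~\ref{thmb1}. Thus $\phi$ has a B-point at $\tau$ with some unimodular $\omega$ and a Pick function $\eta$ (with $-w\eta(w)$ also Pick) satisfying formula \eqref{eqb2}. After rotating coordinates we may assume $\tau=(1,1)$ and $\omega=1$, so that $D\phi(\tau)[h] = -h_2\,\eta(h_2/h_1)$. For any $h \in \mathbb{C}^2$ with $\Re h_j < 0$, the ray $\zeta \mapsto \tau + \zeta h$ ($\zeta > 0$) is nontangential at $\tau$, and the first-order expansions
\[
1 - |\phi(\tau + \zeta h)| = \zeta\,\Re\bigl(h_2\,\eta(h_2/h_1)\bigr) + o(\zeta), \qquad 1 - \|\tau + \zeta h\| = \zeta\,\min(-\Re h_1, -\Re h_2) + o(\zeta),
\]
combined with the hypothesis, yield the pointwise inequality
\[
\Re\bigl(h_2\,\eta(h_2/h_1)\bigr) \;\leq\; M\,\min(-\Re h_1, -\Re h_2) \qquad (\star)
\]
for every such inward $h$, where $M < \infty$ is the value of the nontangential $\limsup$.

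The core of the argument is to deduce from $(\star)$ that $\eta(w) = a/w + b$ for some real $a, b \leq 0$. Writing the Nevanlinna representation $\eta(w) = \alpha w + \beta + \int_{\mathbb{R}} (1/(t-w) - t/(1+t^2))\,d\mu(t)$ and using the fact that $-w\eta(w)$ is also Pick, one first deduces $\alpha = 0$ and that the representing measure $\mu$ is supported on $(-\infty, 0]$. To rule out any nontrivial $\mu$ on $(-\infty, 0)$, one tests $(\star)$ against carefully chosen complex inward directions $h$ with $h_2/h_1$ close to a putative support point $t_0 < 0$ of $\mu$ (for instance $h_1 = -\phi + i$ and $h_2 = h_1(t_0 + i\delta)$ with small $\phi, \delta > 0$): near such $t_0$, $|\eta(h_2/h_1)|$ blows up like $1/\delta$ whereas $\min(-\Re h_1, -\Re h_2)$ can be arranged to be of order $\phi$, so choosing $\phi$ sufficiently small compared to $\delta$ makes the left-hand side of $(\star)$ dominate the right-hand side. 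Hence $\mu$ must be concentrated at $\{0\}$, giving $\eta(w) = a/w + b$.

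With $\eta$ of this form, $D\phi(\tau)[h] = -ah_1 - bh_2$ is linear in $h$. Condition~(C) of the C-point definition then follows by upgrading the pointwise slice expansion $\phi(\tau + \zeta h) = 1 + \zeta(-ah_1 - bh_2) + o(\zeta)$ to the required uniform nontangential $o(\|\tau - z\|)$ estimate, via a normal families argument that uses the uniform Julia-quotient bound. The main obstacle is the preceding paragraph: the careful combination of the Nevanlinna representation of $\eta$ with the selection of complex inward directions to force the measure $\mu$ to be concentrated at the origin.
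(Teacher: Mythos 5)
Your opening and closing steps are sound and follow the paper's route: testing the Julia quotient along rays $\tau+\zeta h$ and using the existence of the directional derivative from Theorem~\ref{thmb1} is exactly the content of the paper's Lemma~\ref{ujqinner}, and the final reduction (a B-point whose directional derivative is linear in $h$ is a C-point) is legitimate. The genuine gap is in the middle step, where you claim that near a support point $t_0<0$ of the representing measure ``$|\eta(h_2/h_1)|$ blows up like $1/\delta$.'' That is true only when the measure has an atom at $t_0$. For a non-atomic measure the imaginary part of the Cauchy transform is a Poisson integral, which need not blow up at all: it converges to $\pi$ times the density at Lebesgue-a.e.\ point when the measure is absolutely continuous with, say, continuous density. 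A concrete instance is the slope function of $\phi_2$ in Section~\ref{secd}, built from Lebesgue measure on $[-1,1]$: its $\eta$ is bounded near every point of $(-\infty,0)$, so your test directions produce no contradiction by the mechanism you describe, and as written your argument only excludes point masses. To salvage the pointwise approach you would need the finer consequence of $(\star)$ obtained by letting the real part of $h_1$ tend to $0$ first, namely $\Im\eta(t_0+i\delta)\le (\delta/|t_0|)\,\Re\eta(t_0+i\delta)$, and then a Fatou-type a.e.\ boundary analysis treating the singular and absolutely continuous parts of the measure separately. Also, your closing ``normal families'' upgrade to condition (C) is asserted rather than proved; it is a known consequence of the results of \cite{amy12a} (a B-point with linear directional derivative is a C-point), and you should cite it as such rather than gesture at an argument.

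The paper sidesteps the delicate pointwise analysis entirely by using the representation of Lemma~\ref{resolvedrepresentation}: after the substitution $z=(\zeta-1,\zeta+1)$ the inequality of Lemma~\ref{ujqinner} becomes $2\Im g(\zeta)\le\gamma\,\Im\zeta$, where $g$ is the Cauchy transform of $(1-t^2)\,d\mu(t)$. Since $\Im g(t+iy)\,dt$ converges weak-star to $(1-t^2)\,d\mu(t)$ as $y\to 0^+$, while the bound forces $\Im g(t+iy)\to 0$ uniformly, the measure $(1-t^2)\,d\mu$ vanishes, hence $g\equiv 0$ and $f$ is linear (Lemma~\ref{innerlinear}). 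Replacing your blow-up claim with this weak-star argument (or the a.e.\ Fatou argument sketched above) closes the gap; the rest of your outline matches the paper's proof.
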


We also show that for rational functions, B+ points are C points.
\bt
\label{thmrat}
Let $\vp : \D^2 \to \overline{\D}$ be rational. Then if $\tau \in \partial \D^2$
is a B+ point for $\vp$, it is a C point.
\et

\section{Proofs}
\label{secc}

In \cite[Thm. 5.3]{aty12}, Agler, Tully-Doyle and Young gave the following characterization
of the functions $\eta$ arising in Theorem~\ref{thmb1}. In \cite[Thm 6.2]{aty12},
they showed that all such $\eta$ can arise. (We introduce a factor of 4 in the following theorem just
for notational convenience later).

\bt
\label{thmc1}
\cite{aty12}
%{\red What about $\eta =0.$}
%{\blue This is excluded if you define the class as mapping into the open upper half-plane}
The functions $\eta(w)$ and $-w\eta (w)$ are both in the Pick class of one variable
if and only if there is a positive measure $\mu$ supported on $[-1,1]$ such that
\be
\label{eqc1}
\eta(w) \= - 4
\int_{-1}^1 \frac{1}{(1-t) + (1+t) w}
d\mu(t).
\ee
Moreover, for any $\eta$ satisfying \eqref{eqc1}, there exists a holomorphic
$\vp : \D^2 \to \D$ with a B point at $\chi$ satisfying \eqref{eqb2}.
\et
Note that any function $\eta$ given by \eqref{eqc1} is analytic on $\C \setminus (-\infty,0]$
and is real-valued on the positive real axis.

A Pick function of two variables is called homogeneous of degree one if
	$$f(wz_1,wz_2) \= wf(z_1,z_2)$$
	whenever $(z_1,z_2), (wz_1, wz_2) \in \Pi^2$.
	There is a one-to-one correspondence between homogeneous Pick functions
	of two variables and functions $\eta$ of the form \eqref{eqc1}.
	\bprop
	\label{prc1}
	Let $f$ be a homogeneous Pick function of degree one in 2 variables.
Define $\eta$ by
\be
\label{eqc2}
\eta(w) \= e^{i s} f(- \frac{e^{-is}}{w}, - e^{-is}) 
\ee
whenever $(- \frac{e^{-is}}{w}, - e^{-is}) $ is in $\Pi^2$.
Then $\eta$ is well-defined, and both $\eta(w)$ and $-w \eta(w)$ are in the Pick
class of one variable. Conversely, if  $\eta(w)$ and $-w \eta(w)$ are in the Pick
class, then the function $f$ defined by
\be
\label{eqc3}
f(z_1, z_2) \=
\left\{
\begin{array}{l}
-z_2
\eta( \frac{z_2}{z_1} )  \quad 
\text{if }\  
\frac{z_2}{z_1} 
\in \Pi \\
\\
- z_2 \overline{\eta( \frac{\bar z_2}{\bar z_1} )} \quad \text{if\ } 
 \frac{\bar z_2}{\bar z_1} \in \Pi 
\end{array} \right.
\ee
is a homogeneous Pick function of degree one.
\eprop
\bp
Let $f$ be given. Homogeneity means \eqref{eqc2} is independent of $s$. For any $w \in \Pi$, if $s$ is positive and small enough, the point $(- \frac{e^{-is}}{w}, - e^{-is}) $ is in $\Pi^2$, and letting $s \downarrow 0$, we get that
$\eta(w) \in \Pi$. Similarly, for $s$ positive and sufficiently small,
\[
-w \eta (w) \= e^{is/2} f(e^{-is/2}, e^{-is/2} w)
\]
is in $\Pi$.

Conversely, let $\eta$ be given.
Since $\eta$ is real-valued on the positive real-axis, both definitions of $f$ agree when $z_2$ is a positive multiple of $z_1$,
so by continuity \eqref{eqc3} defines $f$ for any $z \in \Pi^2$.
By inspection, the formula is homogeneous of degree one. It remains to show that $f$ maps $\Pi^2$ to $\Pi$.

Let $z \in \Pi^2$, and first assume that $\arg(z_2/z_1) \in [0,\pi)$.
Let $\theta_r = \arg(z_r), \ r = 1,2$. Then as both $\eta(z_2/z_1)$ and
$-(z_2/z_1) \eta(z_2/z_1) $ are in $\Pi$, we have
\beq
0 &\ < \ \arg \eta(z_2/z_1) & \ < \ \pi \\
0 &\ < \ \arg \eta(z_2/z_1) + \theta_2 - \theta_1 - \pi & \ < \ \pi.
\eeq
By \eqref{eqc3}, we have
\[
\arg f(z_1, z_2) \= \theta_2 + \arg \eta(z_2/z_1) - \pi,
\]
and this is sandwiched between $\theta_1$ and $\theta_2$.
Similarly, if $\arg(z_2/z_1) \in (-\pi,0]$, we get that 
$\arg f(z_1, z_2)$ is between $\theta_2$ and $\theta_1$.
\ep

Combining Proposition~\ref{prc1} with Theorem \ref{thmc1}, we get that a function $f$
is in the Pick class of two variables and homogeneous of degree one if and only if it
can be represented as
\be
\label{eqc4}
f(z_1,z_2) \=  4\ \int_{-1}^1 \frac{z_1z_2}{(1+t)z_1+(1-t)z_2}d\mu(t).
\ee
%(The factor $4$ is just introduced for convenience below.)
Moreover, if $\vp$  has a B point at $\tau$, then \eqref{eqb2} becomes
\be
\label{eqc45}
D\varphi(\tau)(i\tau_1 z_1, i\tau_2 z_2)
\=
i \vp(\tau) f(z_1,z_2) .
\ee

We shall rewrite \eqref{eqc4} in a way that makes it easier to read off regularity.

\begin{lemma}%[Resolved representation for degree one Pick functions]
\label{resolvedrepresentation}
	If $f$ is of the form \eqref{eqc4}, then $f$ can be written
	\be
	\label{eqc5}
	f(z_1,z_2) \=  (\mu_0 - \mu_1)z_1 + (\mu_0 +\mu_1)z_2  
	+ (z_2-z_1)g\left(\frac{z_1+z_2}{z_2-z_1}\right),
	\ee
	where $\mu_i$ is the $i$-th moment of $\mu$
	and $g$ is a Pick function of one variable such that $\Im g$ is the Poisson integral of the measure $(1-t^2) d\mu(t)$.
%	if and only if there exists $g$, a  Pick function in one variable, that satisfies
%	 such that $\Im g$ is the Poisson integral of the measure $(1-t^2) d\mu(t)$,
%%	  $(1-t^2)d\mu(t) = \Img |_{\mathbb{R}}$
%$\limsup_{s\rightarrow\infty} |isg(is)| < \infty$, 
%	such that	
\end{lemma}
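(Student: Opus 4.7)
The plan is to reduce \eqref{eqc4} to \eqref{eqc5} by a single polynomial division on the integrand, then identify the remainder with a Cauchy-type integral.

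First, I would change to the sum/difference coordinates $u = z_1+z_2$, $v = z_2 - z_1$. A direct calculation shows
\[
(1+t)z_1 + (1-t)z_2 \= u - tv, \qquad 4z_1 z_2 \= u^2 - v^2,
\]
so the integrand in \eqref{eqc4} becomes $(u^2-v^2)/(u-tv)$. Dividing $u^2 - v^2$ by $u - tv$ (viewed as a polynomial in $u$) yields the identity
\[
\frac{u^2 - v^2}{u - tv} \= (u + tv) - \frac{(1-t^2)v^2}{u-tv},
\]
which I would verify by clearing denominators.

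Second, I would substitute this back into \eqref{eqc4} and split the integral. The linear piece integrates to $\mu_0 u + \mu_1 v = \mu_0(z_1+z_2) + \mu_1(z_2 - z_1) = (\mu_0-\mu_1)z_1 + (\mu_0+\mu_1)z_2$, which is exactly the first two terms of \eqref{eqc5}. In the remainder term I would factor $(z_2-z_1)$ out of both numerator and denominator: writing $w = u/v = (z_1+z_2)/(z_2-z_1)$,
\[
\frac{(1-t^2)v^2}{u - tv} \= (z_2 - z_1)\,\frac{1-t^2}{w - t},
\]
so that after integration the remainder equals $-(z_2-z_1)\,g(w)$ with
\[
g(w) \ := \ -\int_{-1}^1 \frac{1-t^2}{w-t}\,d\mu(t) \= \int_{-1}^1 \frac{1-t^2}{t-w}\,d\mu(t).
\]
Matching signs gives \eqref{eqc5}.

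Finally, I would check the claimed properties of $g$. Since $t \in [-1,1] \subset \R$, the function $w \mapsto 1/(t-w)$ maps the upper half plane to itself and so is a Pick function; integrating against the positive measure $(1-t^2)\,d\mu(t)$ preserves this property, so $g$ is a Pick function of one variable. A direct computation with $w = x+iy$, $y>0$, gives
\[
\Im g(w) \= \int_{-1}^1 \frac{y}{(t-x)^2 + y^2}\,(1-t^2)\,d\mu(t),
\]
which is (up to the usual normalization) the Poisson integral of $(1-t^2)\,d\mu(t)$, as claimed.

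I do not anticipate a genuine obstacle: the content is entirely the algebraic identity for $(u^2-v^2)/(u-tv)$, and once that is in hand, everything else is bookkeeping and the standard observation that Cauchy-type integrals over $\R$ against positive measures are Pick functions.
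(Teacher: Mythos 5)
Your proposal is correct and is essentially the paper's own argument: the division $\frac{u^2-v^2}{u-tv}=(u+tv)-\frac{(1-t^2)v^2}{u-tv}$ is exactly the paper's partial-fraction step $\frac{1-\zeta^2}{t-\zeta}=\frac{1-t^2}{t-\zeta}+t+\zeta$ after the substitution $z=(\zeta-1,\zeta+1)$, and you arrive at the same $g(\zeta)=\int_{-1}^1\frac{1-t^2}{t-\zeta}\,d\mu(t)$. The only cosmetic difference is that you keep $v$ general instead of setting $v=2$ and invoking homogeneity at the end, and like the paper you should note in passing that $\frac{z_1+z_2}{z_2-z_1}\notin[-1,1]$ for $z\in\Pi^2$ and handle $z_1=z_2$ by continuity.
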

\begin{proof}
Let us  calculate $f(\zeta-1,\zeta+1)$ for $\zeta \in \Pi$.
	\begin{align*}
	f(\zeta-1,\zeta+1) & = 4\int_{-1}^1 \frac{(\zeta-1)(\zeta+1)}{(1+t)(\zeta-1)+ (1-t)(\zeta+1)}d\mu(t)\\
		&= 2\int_{-1}^1\frac{1 - \zeta^2 }{t-\zeta}d\mu(t) \\
%	&= 2  \int_{-1}^1 \frac{1-t^2}{t-\zeta} +  \frac{1-\zeta^2}{t-\zeta} -\frac{1-t^2}{t-\zeta}d\mu(t)\\
	&=2 \int_{-1}^1 \frac{1-t^2}{t-\zeta} +  \frac{t^2-\zeta^2}{t-\zeta}d\mu(t) \\
	&=2 \int_{-1}^1 \frac{1-t^2}{t-\zeta} + \zeta  + td\mu(t) \\
	&= 2\mu_0\zeta + 2\mu_1 + 2\int_{-1}^1 \frac{1-t^2}{t-\zeta}d\mu(t)\\
	&= 2\mu_0\zeta + 2\mu_1 + 2\int_{-1}^1 \frac{1}{t-\zeta}(1-t^2)d\mu(t)\\
	\end{align*}
	Define the Pick function $g$ by
	\be
	\label{eqc6}
	g(\zeta) = \int_{-1}^1 \frac{1}{t-\zeta}(1-t^2)d\mu(t). \ee
	 Notice that $g$ is holomorphic on $\C \setminus [-1,1]$, and satisfies
$g(\zeta) = \overline{g(\bar \zeta)}$.
Let $z \in \Pi^2$.
If $z_1 \neq z_2$, define
\beq
w &\= & \frac{z_2 - z_1}{2}\\
\zeta &\=& \frac{z_2 + z_1}{z_2 - z_1},
\eeq
so $z_1 = w(\zeta-1)$ and $z_2 = w(\zeta+1)$.
Since $z_2$ cannot be a negative multiple of $z_1$, we get that $\zeta$ is never
in $[-1,1]$, so 
 \eqref{eqc5} holds provided $z_1 \neq z_2$. By continuity, it holds for all $z \in \Pi^2$.
\end{proof}
Note that from \eqref{eqc6}, we see that $\limsup_{s \to \infty} |is g(is)| < \infty$.
Using Lemma \ref{resolvedrepresentation}, we obtain that
a rational homogeneous Pick function which is linearly bounded must be linear, which in turn
will imply that for rational functions, a $B^+$ point is a $C$-point.
\begin{lemma}\label{rationalresolved}
	Let $f$ be a  homogeneous Pick function of degree $1$ in two variables represented as in \eqref{eqc5}.
	There exists a $C>0$ such that
		$$\|f(z_1,z_2)\| \leq C\|z\|$$
	if and only if $g$ is a bounded function on $\C \setminus [-1,1]$.
	If in addition $f$ is rational, then $g$ is identically $0$.
\end{lemma}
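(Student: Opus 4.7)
The plan is to reduce everything to the one-variable identity
\[
f(w(\zeta-1), w(\zeta+1)) \= 2w\bigl[\mu_0 \zeta + \mu_1 + g(\zeta)\bigr],
\]
which is essentially what was already computed inside the proof of Lemma~\ref{resolvedrepresentation}. In this parametrization $\|z\| = |w|\max(|\zeta-1|,|\zeta+1|)$, and for every $\zeta \in \C \setminus [-1,1]$ the set of $w$ with $(w(\zeta-1), w(\zeta+1)) \in \Pi^2$ is a nonempty open cone, so linear growth bounds on $f$ over $\Pi^2$ transfer directly to growth bounds on $\mu_0\zeta + \mu_1 + g(\zeta)$ in $\zeta$.

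For the easy direction of the equivalence, I would just apply the triangle inequality to \eqref{eqc5}: if $|g|\leq M$ on $\C\setminus[-1,1]$, then using $|z_2 - z_1|\leq 2\|z\|$ one obtains $|f(z_1,z_2)| \leq (|\mu_0-\mu_1|+|\mu_0+\mu_1|+2M)\|z\|$. For the converse, substituting the parametrization into $|f|\leq C\|z\|$ gives
\[
|g(\zeta)| \leq \tfrac{C}{2}\max(|\zeta-1|,|\zeta+1|) + |\mu_0||\zeta| + |\mu_1| \= O(|\zeta|+1).
\]
By inspection of \eqref{eqc6}, $g(\zeta)\to 0$ as $|\zeta|\to\infty$, so $g$ is bounded outside a large disk, while the linear estimate handles the bounded region of $\C \setminus [-1,1]$.

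For the rational case, solving \eqref{eqc5} at $(\zeta-1,\zeta+1)$ yields $g(\zeta) = \tfrac{1}{2} f(\zeta-1,\zeta+1) - \mu_0\zeta - \mu_1$, so if $f$ is rational then $g$ is a rational function of one variable. Being bounded on $\C\setminus[-1,1]$, $g$ cannot have a pole anywhere in $\C$: a pole at some $t_0\in[-1,1]$ would be approached from $\C\setminus[-1,1]$, contradicting boundedness. Hence $g$ is an entire rational function, namely a polynomial; a bounded polynomial is constant, and the constant must be $0$ since $g(\zeta)\to 0$ at infinity.

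The only step that takes any care is the hard direction of the first statement, where one has to combine the linear growth estimate in $\zeta$ with the decay of $g$ at infinity coming from the integral representation \eqref{eqc6}; once that is in place, the rational conclusion follows as a clean corollary.
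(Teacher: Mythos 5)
Your proposal is correct and follows essentially the same route as the paper's proof: the triangle inequality for the easy direction, the substitution $z=(\zeta-1,\zeta+1)$ to convert the linear bound on $f$ into the estimate $|g(\zeta)|=O(|\zeta|+1)$, the decay of the Cauchy transform \eqref{eqc6} at infinity to upgrade this to boundedness, and the observation that a rational $g$ that is bounded and vanishes at infinity must be identically zero. Your remark about the cone of admissible $w$ for each $\zeta\in\C\setminus[-1,1]$ is a slightly more careful justification of the substitution step than the paper gives, but it is not a different argument.
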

\begin{proof}
The	$(\Leftarrow)$ implication is trivial. For the converse, assume $f$ is 
 a  homogeneous Pick function of degree $1$ in two variables,
		and represent it as in \eqref{eqc5}. Then we have
\beq
C\|z\| & \ \geq \ & \|f(z_1,z_2)\| \\
&\geq  & -\|(\mu_0 - \mu_1)z_1\| -\| (\mu_0 +\mu_1)z_2  \|
		+ \|(z_2-z_1)g\left(\frac{z_1+z_2}{z_2-z_1}\right)\| \\
		&\geq & -|\mu_0 - \mu_1|\|z\| - |\mu_0 +\mu_1|\|z  \|
		+ \|(z_2-z_1)g\left(\frac{z_1+z_2}{z_2-z_1}\right)\|.
		\eeq
		So
		$$ (C +2 \mu_0)\|z\|\geq 
		 \|(z_2-z_1)g\left(\frac{z_1+z_2}{z_2-z_1}\right)\|.$$
		Making the substitution $z = (\zeta - 1, \zeta +1),$
		we get that 
\be
\label{eqc7}
(C +2\mu_0)(\|\zeta\| +1) \geq \|2g\left(\zeta \right)\|
\ee
for all $\zeta \in \C \setminus [-1,1]$.
Since $g$ is the Cauchy transform of a compactly supported measure, it vanishes at $\i$,
and so \eqref{eqc7} yields that $g$ is bounded on $\C \setminus [-1,1]$.

If $f$ is rational, then 
\[
g(\zeta) \= \frac 12 [ f(\zeta - 1,\zeta +1) ] - \mu_0 \zeta - \mu_1 ,
\]
is also rational,  and since it vanishes at $\infty$ and has no poles, it is 
 is identically zero.
\end{proof}

%We shall also need to use the bidisk version of Julia's lemma; 
%see  \cite{wlo87, ab98}, and in this particular form,
%\cite[Thm. 4.9 and Eq. 4.17]{amy12a}):
%\bt
%\label{thmc21}
%If \[
%\liminf_{z \to \tau} \frac{1 - |\vp(z)|}{1-\| z\|} \= \alpha < \i,
%\]
%then 
%\be
%\label{eqc22}
%\frac{ | \vp(z) - \vp(\tau)|^2}{1-|\vp(z) |^2}
%\ \leq \ \alpha \ \max_{r =1,2} \frac{|\tau_r - z_r|^2}{1-|z_r|^2} .
%\ee
%Moreover, if $S$ has aperture $M$, then for all $z \in S$,
%\be
%\label{eqc23}
%\frac{1-|\vp(z)|}{1-\| z\|} \ \leq \ 2 \alpha M^2.
%\ee
%\et

{\sc Proof of Theorem~\ref{bplusmain}:}
$(1) \Leftrightarrow (2)$: By Theorems~\ref{thmb1} and \ref{thmc1}, the derivative of $\vp$ at $\tau$
is a  Pick function of two variables, homogeneous of degree $1$, which can therefore be represented as
in \eqref{eqc5}.
From \eqref{eqc45}, we have that for every $h$ that points into $\D^2$,  for $t > 0$:
\[
\vp(\tau + th) - \vp(\tau) \= i \vp(\tau)  tf(-i \bar \tau h_1, -i \bar \tau h_2) + o(t) .
\]
At a B+ point, the left-hand side is bounded by $\alpha t \| h\|$, which 
means 
\be
\label{eqc8}
| D \vp (\tau) [h] | \= |f(-i \bar \tau h_1, -i \bar \tau h_2)|
\ee
is bounded by $\alpha \| h \|$.

Conversely, assume
$$ | D \vp (\tau) [h] | \ \leq \  \alpha \| h \|,
$$
and fix a set $S$ that is non-tangential at $\tau$. Since $S$ is non-tangential, it is enclosed in a cone with apex at $\tau$
and some aperture $M$, 
so there exists some $\vare > 0$ and some compact connected set $K$ of vectors in $\C^2$ with unit
norm  such that
\[
S \cap B(\tau , \vare)  \ \subseteq\  \{ \tau + th: \ 0 < t < \vare, \ h \in K \} \ \subseteq \ \D^2 .
\]
For each $ 0 < t < \vare$, let
\[
\Psi_t (h) \= \frac{\vp(\tau + th) - \vp (\tau)}{t} .
\]
%Since $\tau$ is a B point, Theorem~\ref{thmc21} says that 
%\be
%\label{eqc21}
%\frac{ | \vp(\tau + th) - \vp(\tau)|^2}{1-|\vp(\tau + th) |^2}
%\ \leq \ \alpha \ \max_{r =1,2} \frac{| t h_r|^2}{1-|th_r|^2} .
%\ee
%So, using \eqref{eqc23},
%\beq
%| \Psi_t(h) |^2 &\ \leq \ & \alpha \frac{1-|\vp(\tau + th) |^2}{1-t^2} 
%\\
%&\leq & 4 \alpha^2 M^2.
%\eeq
%Therefore, on $K$
The functions $\Psi_t(h)$ are 
%uniformly bounded by $2 \alpha M$, and
continuous on $K$, and 
\[
\lim_{t \downarrow 0} \Psi_t(h)  \= D\vp (\tau) [h]
\]
exists. Since $K$ is compact, the convergence is uniform. So for all $ \beta > 0$,
 there exists $\delta > 0$ such that
 \[
| \Psi_t(h) - D\vp(\tau)[h] | \ < \ \beta \quad \forall\ 0 < t < \delta .
\]
Therefore
\[
\| \vp(\tau + th) - \vp (\tau) | \ \leq \ (\alpha + \beta) t \quad \forall\ 0 < t < \delta ,
\]
which means
\[
\| \vp(z) - \vp (\tau) | \ \leq \ (\alpha + \beta) \| \tau - z \| \quad \forall\ z \in S \cap B(\tau , \delta).
\]

$(2) \Leftrightarrow (3)$
This follows from  \eqref{eqc8} and Lemma~\ref{rationalresolved}.
\ep

{\sc Proof of Theorem~\ref{thmrat}:}
If $\vp$ is rational, then $D \vp (\tau) [h]$ is a rational function of $h$, 
so $f$ is rational.
By Theorem~\ref{bplusmain} and Lemma~\ref{rationalresolved},
if $f$ is rational then $g$ is zero, so $f$ is linear, and hence $\tau$ is actually a C point.
\ep

We now prove two more lemmata for Theorem \ref{mainresult}.

\begin{lemma} \label{innerlinear}
	Let $f$ be a homogeneous Pick function of degree $1$ in two variables which satisfies
\be
\label{eqc10}
\Im f(z_1,z_2) \leq \gamma  \max(\Im z_1, \Im z_2)
\ee
	for some $\gamma \geq 0.$
	Then $f$ is linear.
\end{lemma}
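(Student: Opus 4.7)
The plan is to reduce the two-variable hypothesis to a scalar bound on the Pick function $g$ of Lemma \ref{resolvedrepresentation}, and then read off from its Cauchy representation that the measure $\nu:=(1-t^{2})\,d\mu$ must vanish. Proposition \ref{prc1} combined with Theorem \ref{thmc1} represents $f$ via a positive measure $\mu$ on $[-1,1]$, and Lemma \ref{resolvedrepresentation} rewrites $f$ in the form \eqref{eqc5}. Thus $f$ is linear if and only if $g\equiv 0$, which is equivalent to $\nu = 0$, i.e. $\mu$ being supported on $\{-1,1\}$. So the goal is to force $\nu = 0$.

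To exploit the hypothesis, substitute $z_{1}=\zeta-1$, $z_{2}=\zeta+1$ with $\zeta \in \Pi$, exactly as in the proof of Lemma \ref{resolvedrepresentation}. Both $\Im z_{j}$ equal $\Im\zeta$, and $f(\zeta-1,\zeta+1)=2\mu_{0}\zeta+2\mu_{1}+2g(\zeta)$. Condition \eqref{eqc10} therefore collapses to the one-variable inequality
\[
\Im g(\zeta)\ \le\ C\,\Im\zeta,\qquad \zeta\in\Pi,
\]
with $C=(\gamma-2\mu_{0})/2$. Since $g$ is itself a Pick function, $\Im g\ge 0$ on $\Pi$, so either $C\ge 0$ or $g\equiv 0$ outright.

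Now combine this with the Poisson-type formula that falls out of \eqref{eqc6}, namely
\[
\Im g(x+iy)\ =\ y\int_{-1}^{1}\frac{d\nu(t)}{(t-x)^{2}+y^{2}}.
\]
The inequality becomes $\int d\nu(t)/((t-x)^{2}+y^{2})\le C$ for all $x\in\mathbb{R}$, $y>0$, and monotone convergence as $y\downarrow 0$ yields $\int (t-x)^{-2}\,d\nu(t)\le C$ for every real $x$. Consequently, for every $x$ and $r>0$,
\[
\nu(B(x,r))\ \le\ r^{2}\int_{B(x,r)}\frac{d\nu(t)}{(t-x)^{2}}\ \le\ Cr^{2}.
\]
Thus the upper Lebesgue density of $\nu$ vanishes at every point, and the Lebesgue differentiation theorem forces $\nu\equiv 0$. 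Hence $g\equiv 0$ and $f$ is linear.

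The only step with any real content is the substitution $z=(\zeta-1,\zeta+1)$: it exactly trivialises the two-variable inequality into a scalar statement about the Poisson integral already computed in Lemma \ref{resolvedrepresentation}. After that, the uniform ball estimate $\nu(B(x,r))\le Cr^{2}$ is immediate and standard differentiation of measures closes the argument, so I do not anticipate a genuine obstacle.
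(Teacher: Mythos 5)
Your proof is correct and takes essentially the same route as the paper: both reduce, via the representation \eqref{eqc5} and the substitution $z=(\zeta-1,\zeta+1)$, to the one-variable bound $\Im g(\zeta)\le C\,\Im\zeta$ and then conclude that the representing measure $(1-t^2)\,d\mu$ must vanish. The only (minor) difference is the last step, where the paper invokes weak-star convergence of $\Im g(t+iy)\,dt$ to the boundary measure while you argue directly from the Poisson kernel via monotone convergence and a ball-density estimate; both are standard and valid.
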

\begin{proof}
	Let 
	$$f(z_1,z_2) =  (\mu_0 - \mu_1)z_1 + (\mu_0 +\mu_1)z_2  +
		(z_2-z_1)g\left(\frac{z_1+z_2}{z_2-z_1}\right)$$
as in \eqref{eqc5},
	where $g$ is the Cauchy transform of the measure  $(1-t^2) d\mu(t)$ on $[-1,1]$, and $\Im g$ is the
	Poisson transform of this measure.
	
	We will show $g$ must equal to $0.$
	
	Since
		$(\mu_0 - \mu_1)z_1 + (\mu_0 +\mu_1)z_2$   satisfies \eqref{eqc10}
		for some $\gamma$, 
	we get that 
		$$h(z_1,z_2) = (z_2-z_1)g\left(\frac{z_1+z_2}{z_2-z_1}\right)$$
	must satisfy, for some perhaps different $\gamma$, 
		$$\Im h(z_1,z_2) \leq \gamma \max(\Im z_1, \Im z_2).$$
	Under the substitution $z_1 = \zeta  - 1, z_2 = \zeta + 1$ we get that
\be
\label{eqc11}
2 \Im g(\zeta) \leq \gamma\Im \zeta.
\ee
As $\Im g$ is bounded in $\Pi$, and $\Im g(t+iy) dt$ converges weak-star to
$(1-t^2) d\mu(t)$ as $y \to 0^+$, \eqref{eqc11} shows that $\Im g $, and hence $g$,  is identically $0$.
\end{proof}

\begin{lemma} \label{ujqinner}
	Suppose 
		$$\gamma =\limsup_{z \xrightarrow{nt} \tau}  \frac{1 - |\varphi(z)|}{1-\|z\|}$$
	is finite. The Pick function in two variables
	 $$f(z_1,z_2) = -i\overline{\varphi(\tau)}D\varphi(\tau)(i\tau_1 z_1, i\tau_2 z_2)$$ satisfies
\be
\label{eqc9}
\Im f(z_1,z_2) \leq \gamma \max (\Im z_1, \Im z_2).
\ee
\end{lemma}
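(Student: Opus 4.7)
The plan is to approach $\tau$ along a single straight-line ray and show that, along it, the Julia quotient has a finite limit equal to $\Im f(z_1,z_2)/\min(\Im z_1,\Im z_2)$, then invoke the hypothesis. First observe that $\gamma<\infty$ implies the $\liminf$ condition of Theorem~\ref{thmb1}(A), so $\tau$ is a $B$-point, $\omega := \varphi(\tau)$ is the nontangential boundary value, and $D\varphi(\tau)[h]$ is defined for every $h$ pointing into $\D^2$. For fixed $(z_1,z_2)\in\Pi^2$ and $h = (i\tau_1 z_1, i\tau_2 z_2)$, one has $\tau + th \in \D^2$ for all small $t>0$, since $|\tau_r(1+itz_r)|^2 = 1 - 2t\Im z_r + t^2|z_r|^2 < 1$ whenever $\Im z_r > 0$.

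Next I would carry out the short expansions in $t$. By the directional-derivative definition of $f$ (equation~\eqref{eqc45}), $\varphi(\tau+th) = \omega + it\omega f(z_1,z_2) + o(t)$, and a brief manipulation gives $1 - |\varphi(\tau+th)| = t\,\Im f(z_1,z_2) + o(t)$. From the displayed formula for $|\tau_r(1+itz_r)|^2$, one obtains $1 - \|\tau+th\|_\infty = t\min(\Im z_1,\Im z_2) + O(t^2)$. Finally, $\|(\tau+th)-\tau\|_\infty = t\max(|z_1|,|z_2|)$, so the ratio $\|(\tau+th)-\tau\|_\infty/(1-\|\tau+th\|_\infty)$ converges to the finite number $\max(|z_1|,|z_2|)/\min(\Im z_1,\Im z_2)$; in particular the ray is a nontangential set at $\tau$.

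Combining these, the Julia quotient along the ray converges to $\Im f(z_1,z_2)/\min(\Im z_1,\Im z_2)$. The hypothesis immediately yields the stronger bound $\Im f(z_1,z_2) \leq \gamma\min(\Im z_1,\Im z_2)$, which implies \eqref{eqc9} since $\min\leq\max$.

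The only subtle point---and presumably the reason the lemma is stated in the weaker ``max'' form---is the verification of nontangentiality: when $\min\Im z_r$ is much smaller than $\max|z_r|$ the aperture of the ray is large, but it is still a finite number, and the paper's definition of nontangential set admits arbitrarily large (finite) apertures. Once this is noted, the hypothesis applies to every straight-line ray into $\D^2$ of this form and no further difficulty arises; both the $1/t$-order terms in numerator and denominator of the Julia quotient are genuine leading terms, so the $o(t)$ error from the directional derivative and the $O(t^2)$ error from the boundary expansion both vanish in the limit.
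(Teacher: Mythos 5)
Your proof is correct and follows essentially the same route as the paper's: restrict to a ray $\tau+th$ pointing into the bidisk, extract the limit of the Julia quotient from the first-order expansion $\varphi(\tau+th)=\omega+it\omega f(z_1,z_2)+o(t)$, observe the ray is nontangential, and invoke the hypothesis (the paper runs the identical computation via l'Hospital's rule applied to $\frac{1-|\varphi|^2}{1-|z_r|^2}$ coordinate-by-coordinate instead of expanding the sup norm directly). Like the paper's argument, yours actually yields the stronger bound $\Im f(z_1,z_2)\leq\gamma\min(\Im z_1,\Im z_2)$, of which the stated inequality with $\max$ is a weakening.
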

\begin{proof}
Without loss of generality, assume $\tau = \chi = (1,1)$ and $f(\tau)=1.$
	Note
	$\limsup_{z \xrightarrow{nt} \tau}  \frac{1 - |\varphi(z)|^2}{1-|z_r|^2}$
	is finite for $r=1,2$.
	Consider the limit along the ray $z = 1 - th$ as $t \downarrow 0,$
for any $h$ pointing into $\D^2$. Since we chose $\tau = \chi$, this means
$\Re(h_r) < 0$ for $r=1,2$.
	$$
\limsup_{z \xrightarrow{nt} \tau}  \frac{1 - |\varphi(1-th)|^2}{1-|1-th_r|^2} \ \leq \ \gamma.$$
	So 
		$$\limsup_{t \rightarrow 0^+} 
		\frac{1 - (\Re \varphi(1-th))^2-(\Im \varphi(1-th))^2}{1-(\Re1-th_r)^2-(\Im1-th_r)^2} \ \leq \ \gamma.$$
	Applying l'Hospital's rule, we get
		$$\frac{\Re D\varphi(\tau)[h]}{\Re h_r} \leq \gamma.$$
	That is,
		$$\Re D\varphi(\tau)[h] \leq \gamma\Re h_r.$$
As $z = ih$, we get \eqref{eqc9}.
\end{proof}
{\sc Proof of Theorem~\ref{mainresult}:} This now follows from Lemmata \ref{ujqinner} and \ref{innerlinear}.
\ep

\section{Examples}
\label{secd}

Consider the functions $\phi_1$ to $\phi_4$ from the Introduction.
%As 
%\[
%\phi_4(r,r) \=
% \exp(- \frac{1+r}{1-r}),
%\]
%condition (A) of Theorem~\ref{thmb1} fails, so $\chi$ is not a B point.
Note that 
\[
\phi_4(r,r) \= B(r)
\]
where $B$ is the Blaschke product with zeroes at $(1- 2^{-n})$.
This is an interpolating sequence, so even  the limit
$\lim_{r \uparrow 1} |\phi_4(r,r)|$ does not exist.
In particular,  $\chi$ fails to be a B point for $\phi_4$.

\vs
For $\phi_3$, we get
\[
\lim_{r \uparrow 1} \frac{1 - |\phi(r \chi)|}{1-r} \= 1 ,
\]
so $\chi$ is a B point.
A calculation yields
\[
D \phi_3 (\chi) [h]
\=
- \frac{3h_1 h_2}{h_1 + 2 h_2} .
\]
Choosing $h = (- \vare +2 i , -\vare -i)$, we see that this is not $O(\|h\|)$, so $\chi$ is not a B+ point.

\vs
The function $\phi_1$ is given in \cite{amy12a} as an example of a function with a C point.
It is easy to check that
\[
\phi_1(r,r) \= r^2 ,
\]
so $\chi$ is a B point. By a straightforward but lengthier calculation,
\[
D\phi_1(\chi) [h] \= 2 h_2 ,
\]
which is linear, so $\chi$ is a C point.

\vs
The formula for $\phi_2$ reduces to $0/0$ when $z_1 = z_2$;
on the diagonal $\phi_2 (z)$ should be defined to equal $\frac{-3+5z_1}{5- 3z_1 } $.
We claim that this will give a function in the Schur class that has a B+ point at $\chi$.

To show this, we shall start with $g$ from \eqref{eqc6}, where we choose
$\mu$ to be linear Lebesgue measure on $[-1,1]$; so
\beq
g(\zeta) &\=& \int_{-1}^1 \frac{1-t^2}{t-\zeta}dt \\
&=& (1-\zeta^2) \log \left( \frac{\zeta -1}{\zeta+1} \right) - 2 \zeta .
\eeq
The function $g$ is bounded on $\C \setminus [-1,1]$.
From \eqref{eqc5},
\beq
f(z_1,z_2) &\= & 2 z_1 + 2 z_2 + (z_2 - z_1) g\left (\frac{z_2 + z_1}{z_2 - z_1} \right)\\
&=& 
\begin{cases} \frac{-4z_1z_2}{z_2 - z_1} \log (\frac{z_1}{z_2}) &\quad z_1 \neq z_2 \\
-4 z_1 &\quad z_1 = z_2
\end{cases}
 .
\eeq
From \eqref{eqc1}, we have
\[
\eta(w) \= - 4 \int_{-1}^1 \frac{dt}{(1-t) + (1+t) w} .
\]
Now, we follow the recipe from \cite{aty12} to produce a function with a B point at $\chi$ and slope 
function $\eta$.
For each $t \in [-1,1]$, let
\[
\psi_t(z) \= \left( (1-t) \frac{1+z_1}{1-z_1} + (1+t) 
\frac{1+z_2}{1-z_2} \right)^{-1} ,
\]
and let
\[
\psi(z) \= 4 \int_{-1}^1 \psi_t(z) dt .
\]
Each $\psi_t$, and hence also $\psi$, maps $\D^2$ to the right half-plane.
Moreover $\chi$ is a B point for $\psi$, and
\[
D \psi(\chi) [h] \= - 2 \int_{-1}^1 \frac{h_1 h_2}{(1-t) h_1 + (1+t) h_2} dt .
\]
Finally, we let 
\beq
\phi_2(z) &\= &  \frac{1-\psi(z)}{1 + \psi(z)} \\
&\=& 
\begin{cases} \frac{(z_2 -z_1) -2(1-z_1)(1-z_2)  \log (\frac{1+z_2}{1-z_2} \frac{1-z_1}{1+z_1} ) } 
{(z_2 -z_1)  + 2(1-z_1)(1-z_2)  \log (\frac{1+z_2}{1-z_2} \frac{1-z_1}{1+z_1})  } 
&\quad z_1 \neq z_2 \\
\frac{-3+5z_1}{5- 3z_1 }  &\quad z_1 = z_2
\end{cases}
 .
\eeq
Then $\lim_{r \uparrow 1} \phi_2(r,r) = 1$ and
\beq
D \phi_2 (\chi) [h] &\=&  -2 D \phi(\chi) [h] \\
&=& - h_2 \eta(h_2/h_1),
\eeq
so $\phi_2$ has the required slope function, and hence $\chi$  is a B+ point.

%\blue
%Can you check the calculations above?
%\black

%{\red Should be right}

%\bibliography{../references}
\begin{bibdiv}
\begin{biblist}

\bib{ab98}{article}{
      author={Abate, M.},
       title={The {Julia-Wolff-Carath\'eodory} theorem in polydisks},
        date={1998},
     journal={J. Anal. Math.},
      volume={74},
       pages={275\ndash 306},
}

\bib{amy11b}{article}{
      author={Agler, J.},
      author={M\raise.45ex\hbox{c}Carthy, J.E.},
      author={Young, N.J.},
       title={Facial behavior of analytic functions on the bidisk},
        date={2011},
     journal={Bull. Lond. Math. Soc.},
      volume={43},
       pages={478\ndash 494},
}

\bib{amy12a}{article}{
      author={Agler, J.},
      author={M\raise.45ex\hbox{c}Carthy, J.E.},
      author={Young, N.J.},
       title={A {Carath\'eodory} theorem for the bidisk via {Hilbert} space
  methods},
        date={2012},
     journal={Math. Ann.},
      volume={352},
      number={3},
       pages={581\ndash 624},
}

\bib{aty12}{article}{
      author={Agler, J.},
      author={Tully-Doyle, R.},
      author={Young, N.~J.},
       title={Boundary behavior of analytic functions of two variables via
  generalized models},
        date={2012},
        ISSN={0019-3577},
     journal={Indag. Math. (N.S.)},
      volume={23},
      number={4},
       pages={995\ndash 1027},
         url={http://dx.doi.org/10.1016/j.indag.2012.07.003},
}

\bib{bk13}{article}{
      author={Bickel, Kelly},
      author={Knese, Greg},
       title={Inner functions on the bidisk and associated {H}ilbert spaces},
        date={2013},
     journal={J. Funct. Anal.},
      volume={265},
      number={11},
       pages={2753\ndash 2790},
}

\bib{car29}{article}{
      author={Carath\'eodory, C.},
       title={{\"Uber die Winkelderivierten von beschr\"ankten analytischen
  Funktionen}},
        date={1929},
     journal={Sitzunber. Preuss. Akad. Wiss.},
       pages={39\ndash 52},
}

\bib{jaf93}{article}{
      author={Jafari, F.},
       title={Angular derivatives in polydisks},
        date={1993},
     journal={Indian J. Math.},
      volume={35},
       pages={197\ndash 212},
}

\bib{ju20}{article}{
      author={Julia, G.},
       title={Extension nouvelle d'un lemme de {Schwarz}},
        date={1920},
     journal={Acta Math.},
      volume={42},
       pages={349\ndash 355},
}

\bib{wlo87}{article}{
      author={Wlodarczyk, K.},
       title={{Julia's} lemma and {Wolff's} theorem for {$J*$-algebras}},
        date={1987},
     journal={Proc. Amer. Math. Soc.},
      volume={99},
       pages={472\ndash 476},
}

\end{biblist}
\end{bibdiv}

\end{document}